\documentclass[11pt,leqno]{article}
\usepackage[margin=1in]{geometry} 
\usepackage{amssymb,amsfonts,amsmath,bbm,mathrsfs,stmaryrd,mathtools}
\usepackage{xcolor}
\usepackage{url}

\usepackage{graphicx}

\usepackage{accents}

\usepackage{extarrows}

\usepackage[shortlabels]{enumitem}
\usepackage{tensor}

\usepackage{xr}
\usepackage[T1]{fontenc}
\usepackage[utf8]{inputenc}

\usepackage[colorlinks,
linkcolor=black!75!red,
citecolor=blue,
pdftitle={},
pdfproducer={pdfLaTeX},
pdfpagemode=None,
bookmarksopen=true,
bookmarksnumbered=true,
backref=page]{hyperref}

\usepackage{tikz}
\usetikzlibrary{arrows,calc,decorations.pathreplacing,decorations.markings,decorations.shapes,intersections,shapes.geometric,through,fit,shapes.symbols,positioning,decorations.pathmorphing}

\makeatletter
\newlength\zig@L
\newlength\zig@La
\newlength\zig@Lb

\newcommand{\xzigrightarrow}[2][]{%
  \mathrel{%
    \settowidth{\zig@La}{$\scriptstyle #2$}%
    \settowidth{\zig@Lb}{$\scriptstyle #1$}%
    \zig@L=\zig@La\relax
    \ifdim\zig@Lb>\zig@L \zig@L=\zig@Lb\fi
    \advance\zig@L by 2.2em\relax
    \tikz[baseline=-0.65ex]{%
      \draw[->,
            line cap=round,
            decorate,
            decoration={zigzag,segment length=4pt,amplitude=1.1pt}]%
        (0,0) -- (\zig@L,0)
        node[midway,above=2pt] {$\scriptstyle #2$}%
        \if\relax\detokenize{#1}\relax\else
          node[midway,below=2pt] {$\scriptstyle #1$}%
        \fi
      ;
    }%
  }%
}
\makeatother

\makeatletter
\newcommand{\squigjoin}{1mu} 

\def\sqleft@{\sim}                    
\def\sqmid@{\sim\mkern-\squigjoin}    

\def\rightsquigarrowfill@{%
  \arrowfill@{\sqleft@}{\sqmid@}{\mkern-4mu\succ}%
}

\newcommand{\xrightsquigarrow}[2][]{%
  \ext@arrow 0359\rightsquigarrowfill@{#1}{#2}%
}
\makeatother

\makeatletter
\newcommand*\circled[1]{\tikz[baseline=(char.base)]{
    \node[shape=circle, draw, inner sep=0pt, 
    minimum height={\f@size},] (char) {\vphantom{WAH1g}#1};}}
\makeatother

\makeatletter
\DeclareRobustCommand\widecheck[1]{{\mathpalette\@widecheck{#1}}}
\def\@widecheck#1#2{%
    \setbox\z@\hbox{\m@th$#1#2$}%
    \setbox\tw@\hbox{\m@th$#1%
       \widehat{%
          \vrule\@width\z@\@height\ht\z@
          \vrule\@height\z@\@width\wd\z@}$}%
    \dp\tw@-\ht\z@
    \@tempdima\ht\z@ \advance\@tempdima2\ht\tw@ \divide\@tempdima\thr@@
    \setbox\tw@\hbox{%
       \raise\@tempdima\hbox{\scalebox{1}[-1]{\lower\@tempdima\box
\tw@}}}%
    {\ooalign{\box\tw@ \cr \box\z@}}}
\makeatother

\usepackage{braket}

\usepackage[amsmath,thmmarks,hyperref]{ntheorem}
\usepackage{cleveref}

\newcommand\nthalias[1]{\AddToHook{env/#1/begin}{\crefalias{lemma}{#1}}}

\nthalias{definition}
\nthalias{example}
\nthalias{examples}
\nthalias{remark}
\nthalias{remarks}
\nthalias{convention}
\nthalias{notation}
\nthalias{construction}
\nthalias{sketch}
\nthalias{theoremN}
\nthalias{propositionN}
\nthalias{corollaryN}
\nthalias{lemma}
\nthalias{proposition}
\nthalias{corollary}
\nthalias{theorem}
\nthalias{conjecture}
\nthalias{question}
\nthalias{assumption}

\creflabelformat{enumi}{#2#1#3}

\crefname{section}{Section}{Sections}
\crefformat{section}{#2Section~#1#3} 
\Crefformat{section}{#2Section~#1#3} 

\crefname{subsection}{\S}{\S\S}
\AtBeginDocument{%
  \crefformat{subsection}{#2\S#1#3}%
  \Crefformat{subsection}{#2\S#1#3}%
}

\crefname{subsubsection}{\S}{\S\S}
\AtBeginDocument{%
  \crefformat{subsubsection}{#2\S#1#3}%
  \Crefformat{subsubsection}{#2\S#1#3}%
}

\theoremstyle{plain}

\newtheorem{lemma}{Lemma}[section]
\newtheorem{proposition}[lemma]{Proposition}

\newtheorem{theorem}[lemma]{Theorem}

\theoremstyle{plain}
\theoremnumbering{Alph}
\newtheorem{theoremN}{Theorem}

\theoremstyle{plain}
\theorembodyfont{\upshape}
\theoremsymbol{\ensuremath{\blacklozenge}}

\newtheorem{definition}[lemma]{Definition}

\newtheorem{examples}[lemma]{Examples}
\newtheorem{remark}[lemma]{Remark}

\newtheorem{notation}[lemma]{Notation}

\crefname{definition}{definition}{definitions}
\crefformat{definition}{#2definition~#1#3} 
\Crefformat{definition}{#2Definition~#1#3} 

\crefname{ex}{example}{examples}
\crefformat{example}{#2example~#1#3} 
\Crefformat{example}{#2Example~#1#3} 

\crefname{exs}{example}{examples}
\crefformat{examples}{#2example~#1#3} 
\Crefformat{examples}{#2Example~#1#3} 

\crefname{remark}{remark}{remarks}
\crefformat{remark}{#2remark~#1#3} 
\Crefformat{remark}{#2Remark~#1#3} 

\crefname{remarks}{remark}{remarks}
\crefformat{remarks}{#2remark~#1#3} 
\Crefformat{remarks}{#2Remark~#1#3} 

\crefname{convention}{convention}{conventions}
\crefformat{convention}{#2convention~#1#3} 
\Crefformat{convention}{#2Convention~#1#3} 

\crefname{notation}{notation}{notations}
\crefformat{notation}{#2notation~#1#3} 
\Crefformat{notation}{#2Notation~#1#3} 

\crefname{table}{table}{tables}
\crefformat{table}{#2table~#1#3} 
\Crefformat{table}{#2Table~#1#3}

\crefname{lemma}{lemma}{lemmas}
\crefformat{lemma}{#2lemma~#1#3} 
\Crefformat{lemma}{#2Lemma~#1#3} 

\crefname{proposition}{proposition}{propositions}
\crefformat{proposition}{#2proposition~#1#3} 
\Crefformat{proposition}{#2Proposition~#1#3} 

\crefname{propositionN}{proposition}{propositions}
\crefformat{propositionN}{#2proposition~#1#3} 
\Crefformat{propositionN}{#2Proposition~#1#3} 

\crefname{corollary}{corollary}{corollaries}
\crefformat{corollary}{#2corollary~#1#3} 
\Crefformat{corollary}{#2Corollary~#1#3} 

\crefname{corollaryN}{corollary}{corollaries}
\crefformat{corollaryN}{#2corollary~#1#3} 
\Crefformat{corollaryN}{#2Corollary~#1#3} 

\crefname{theorem}{theorem}{theorems}
\crefformat{theorem}{#2theorem~#1#3} 
\Crefformat{theorem}{#2Theorem~#1#3} 

\crefname{theoremN}{theorem}{theorems}
\crefformat{theoremN}{#2theorem~#1#3} 
\Crefformat{theoremN}{#2Theorem~#1#3} 

\crefname{enumi}{}{}
\crefformat{enumi}{#2#1#3}
\Crefformat{enumi}{#2#1#3}

\crefname{assumption}{assumption}{Assumptions}
\crefformat{assumption}{#2assumption~#1#3} 
\Crefformat{assumption}{#2Assumption~#1#3} 

\crefname{construction}{construction}{Constructions}
\crefformat{construction}{#2construction~#1#3} 
\Crefformat{construction}{#2Construction~#1#3} 

\crefname{sketch}{sketch}{Sketches}
\crefformat{sketch}{#2sketch~#1#3} 
\Crefformat{sketch}{#2Sketch~#1#3} 

\crefname{question}{question}{Questions}
\crefformat{question}{#2question~#1#3} 
\Crefformat{question}{#2Question~#1#3} 

\crefname{equation}{}{}
\crefformat{equation}{(#2#1#3)} 
\Crefformat{equation}{(#2#1#3)}

\numberwithin{equation}{section}

\theoremstyle{nonumberplain}
\theoremsymbol{\ensuremath{\blacksquare}}

\newtheorem{proof}{Proof}
\newcommand\pf[1]{\newtheorem{#1}{Proof of \Cref{#1}}}

\newcommand\bC{{\mathbb C}}

\newcommand\bF{{\mathbb F}}

\newcommand\bR{{\mathbb R}}
\newcommand\bS{{\mathbb S}}

\newcommand\bZ{{\mathbb Z}}

\newcommand\cC{{\mathcal C}}

\newcommand\cE{{\mathcal E}}

\newcommand\cX{{\mathcal X}}

\newcommand\ol{\overline}

\DeclareMathOperator{\Ad}{Ad}
\DeclareMathOperator{\id}{id}

\DeclareMathOperator{\End}{\mathrm{End}}

\DeclareMathOperator{\GL}{GL}

\newcommand{\comment}[1]{}

\title{Eigenvalue collision and exotic preservers on semisimple operators}
\author{Alexandru Chirvasitu}

\begin{document}

\date{}

\newcommand{\Addresses}{{
  \bigskip
  \footnotesize

  \textsc{Department of Mathematics, University at Buffalo}
  \par\nopagebreak
  \textsc{Buffalo, NY 14260-2900, USA}  
  \par\nopagebreak
  \textit{E-mail address}: \texttt{achirvas@buffalo.edu}

}}

\maketitle

\begin{abstract}
  We classify $n\times n$-matrix-valued continuous commutativity and spectrum preservers defined on spaces of (a) normal, (b) semisimple and (c) arbitrary $n\times n$ matrices with spectra contained in sufficiently connected subsets $\mathcal{X}\subseteq \mathbb{C}$, generalizing a number of results due to \v{S}emrl, Gogi\'{c}, Toma\v{s}evi\'c and the author among others. In case (a) these are always conjugations or transpose conjugations, while in cases (b) and (c) qualitatively distinct possibilities arise depending on the local regularity of the complex-conjugation map close to coincident-eigenvalue loci of $\mathcal{X}^n$.
\end{abstract}

\noindent \emph{Key words:
  configuration space;
  connected component;
  fundamental theorem of projective geometry;
  isotropy group;
  maximal abelian subalgebra;
  preserver problem;
  semisimple operator;
  spectrum  
}

\vspace{.5cm}

\noindent{MSC 2020: 15A86; 47B49; 47B15; 54D05; 47A10; 51A05; 39A70; 47B39
}


\section*{Introduction}

The present considerations fit under the broad umbrella of \emph{preserver-problem} literature, with \cite{2501.06840v2,MR4927632,MR4830482,Petek-TM,zbMATH01100760,MR1866032,zbMATH05302134} (to name but a few) as paradigmatic: the constraint that a map between various matrix spaces (e.g. a self-map of $M_n(\bC)$, or on unitary/special unitary/Hermitian/etc. $n\times n$ matrices) conserve various algebraic/analytic invariants rigidifies the situation sufficiently so as to afford a complete classification of the maps in question. The various threads in the now-rich tradition ultimately trace back to the \emph{Aupetit-Kaplansky problem} \cite[\S 1]{Aupetit} on whether linear spectrum-preserving surjections between complex semisimple Banach algebras must automatically be \emph{Jordan morphisms} in the sense of \cite[\S II.1.8.1]{mcc_jord_2004}. 

More specifically, the invariants preserved throughout most of the paper are operators' spectra and mutual commutativity, hence the pithy phrase \emph{CS preserver} (and variants) for maps which conserve these. The most direct entry point to the sequel is a curious phenomenon noted in the process of proving \cite[Theorem 0.1]{2501.06840v2} that the continuous $M_n(\bC)$-valued, $n\ge 3$ CS preservers on \emph{semisimple} (i.e. diagonalizable) $n\times n$ operators are exactly the conjugations $\Ad_T:=T(-)T^{-1}$ and transpose conjugations $\Ad_T\circ(-)^t$: while that statement is valid as made, there is a somewhat surprising (the above title's \emph{exotic}) other candidate: 
\begin{equation}\label{eq:snssns}
  \begin{gathered}
    M_{n,ss}(\bC)
    \ni
    \Ad_S N
    \xmapsto{\quad\Phi\quad}
    \Ad_{S^{-1}}N
    \in
    M_{n,ss}(\bC)\\
    S\ge 0,\ N^*N=NN^*\ \left(\text{i.e. $N$ is \emph{normal}}\right),
  \end{gathered}
\end{equation}
with the positivity $S\ge 0$ being the familiar \cite[Definition I.2.6.7]{blk} requirement that $\Braket{v\mid Sv}\ge 0$ for all $v\in \bC^n$ and the usual inner product $\Braket{-\mid -}$ on $\bC^n$. The only failure mode, it turns out, is discontinuity: although perhaps non-obviously, \Cref{eq:snssns} \emph{is} a well-defined CS preserver. To better isolate that precise point of failure, it will be profitable to separate two aspects of the problem:
\begin{itemize}[wide]
\item on the one hand the inherent geometry underlying \Cref{eq:snssns}, in the guise of its effect on the respective eigenspaces of its argument and image: for \emph{simple} diagonal operators (i.e. those with simple spectra),
  \begin{equation*}
    \forall i\left(\ker\left(\lambda_i-\Ad_SN\right)=\ell_i\right)
    \xRightarrow{\quad}
    \forall i\left(\ker\left(\lambda_i-\Ad_SN\right)=\ell'_i:=\left(\bigoplus_{j\ne i}\ell_j\right)^{\perp}\right);
  \end{equation*}

\item on the other, whatever residual phenomena result from eigenvalue dynamics only. 
\end{itemize}
As $\left(\ell_i\right)_i\mapsto \left(\ell'_i\right)_i$ is a perfectly well-defined continuous self-map on the space of linearly independent line $n$-tuples in $\bC^n$ (the \emph{eversion} map of \cite[(0-2)]{2601.11455v2}), it is the latter factor that must bring about the aforementioned discontinuity. This is visible in the proof of \cite[Proposition 2.14]{2501.06840v2}, which traces back the issue, via the semisimple-operator functional calculus discussed in \cite{zbMATH06285212}, to the failure of the complex conjugation map $\overline{(-)}$ to be sufficiently regular.

To elaborate, recall \cite[Definition 3.1]{zbMATH06285212}'s \emph{difference operator} mapping symmetric $n$-distinct-variable functions to again symmetric $(n+1)$-variable such:
\begin{equation*}
  \Delta f(z_0,\cdots,z_n)
  :=
  \frac{f(z_1,\cdots,z_n)-f(z_0,\cdots,z_{n-1})}{z_n-z_0}.
\end{equation*}
Said irregularity amounts to the iterated ($n$-variable) $\Delta^{n-1}\ol{(-)}$ not being locally bounded around diagonal elements $(z,\cdots,z)\in \bC^n$. The crucial hinge, then, in disqualifying \Cref{eq:snssns} is precisely the eigenvalue coincidence (or \emph{collision}) the paper's title hints at. 

The following statement compacts and paraphrases \Cref{th:nrm,th:ss,th:all} below, at the cost of some sharpness and strength.  

\begin{theoremN}\label{thn:3variants}
  Let $n\in \bZ_{\ge 3}$ and $\cX\subseteq \bC$ a perfect set with its space $\cC^n(\cX)$ of distinct $n$-tuples connected.
  \begin{enumerate}[(1),wide]
  \item\label{item:thn:3variants:nrm} The continuous $M_n(\bC)$-valued CS preservers on normal $n\times n$ matrices with spectra contained in $\cX$ are precisely the conjugations and transpose conjugations.  

  \item\label{item:thn:3variants:ss} The continuous $M_n(\bC)$-valued CS preservers on semisimple $n\times n$ matrices with spectra contained in $\cX$ are precisely the conjugations, transpose conjugations, and these composed with \Cref{eq:snssns} precisely when $\Delta^{n-1}\ol{(-)}$, as a function on $\cC^n(\cX)$, is locally bounded around every $(z,\cdots,z)\in \cX^n$. 

  \item\label{item:thn:3variants:all} The continuous $M_n(\bC)$-valued CS preservers on arbitrary $n\times n$ matrices with spectra contained in $\cX$ are precisely the conjugations, transpose conjugations, and these composed with \Cref{eq:snssns} precisely when $\Delta^{n-1}\ol{(-)}$, as a function on $\cC^n(\cX)$, has finite limits at all $(z,\cdots,z)\in \cX^n$. 
  \end{enumerate}
\end{theoremN}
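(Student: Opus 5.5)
A preliminary remark on scope: since Theorem~\ref{thn:3variants} paraphrases Theorems~\ref{th:nrm}, \ref{th:ss} and \ref{th:all}, what follows is the skeleton those proofs should share, organised as a reduction to the simple-spectrum locus.

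\textbf{Step 1: eigenspace data on simple-spectrum operators.} Let $\Phi$ be a continuous CS preserver on the relevant domain. For an operator $A$ with simple spectrum $(\lambda_1,\dots,\lambda_n)\in\cC^n(\cX)$, commutativity preservation sends the maximal abelian algebra $\{A\}'$ (diagonal in $A$'s eigenbasis) into a commutative family of operators with the same spectra. Consequently $\Phi$ restricted to $\{A\}'\cap(\text{domain})$ is a spectrum-preserving map on a single ``diagonal torus''. This produces a map on frames
\[
(\ell_i)_i\longmapsto(\ell'_i)_i
\]
of linearly independent line $n$-tuples. Connectedness of $\cC^n(\cX)$ and continuity should make the assignment $\lambda_i\mapsto\ell'_i$ compatible with eigenvalue labels, with no permutation twist. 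Commutativity between operators sharing only some eigenlines (block-commuting pairs) then forces the frame map to preserve the incidence structure of spans $\bigoplus_{i\in I}\ell_i$.

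\textbf{Step 2: fundamental theorem of projective geometry.} Since $n\ge3$, I invoke the fundamental theorem of projective geometry, as in \cite{2501.06840v2}. It shows that the frame map is induced by a semilinear bijection, possibly composed with duality. Continuity, together with perfectness of $\cX$, rules out wild field automorphisms, leaving $\bC$-linear or conjugate-linear maps. The outcome is that $\Phi$ agrees on simple-spectrum operators with one of four candidates: $\Ad_T$, $\Ad_T\circ(-)^t$, or these composed with the eversion-induced map~\Cref{eq:snssns}.

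In case~\ref{item:thn:3variants:nrm}, normal operators have orthonormal frames, and eversion fixes these. So the exotic branch collapses to the standard ones, and density of simple-spectrum normal operators (from perfectness of $\cX$) plus continuity finishes the argument.

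\textbf{Step 3: extension across eigenvalue collisions (the main obstacle).} For cases \ref{item:thn:3variants:ss} and \ref{item:thn:3variants:all} one must decide when the exotic map, defined on the dense simple-spectrum locus, extends continuously.

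\begin{itemize}[wide]
\item I would write \Cref{eq:snssns} on $A=\Ad_S N$ in terms of $A$ and $A^*$. Using $S^{-2}$-conjugation, one gets $\Phi(A)=\Ad_{S^{-2}}A$, which is expressible through a functional calculus of the form $f(A)$ with $f=\ol{(-)}$ applied to $N$.
\item The Lagrange/Newton interpolation formula for semisimple functional calculus \cite{zbMATH06285212} expresses $f(A)$ via the divided differences $\Delta^kf$ at the eigenvalues.
\item Near a collision point $(z,\dots,z)$ the top coefficient is $\Delta^{n-1}\ol{(-)}$.
\end{itemize}

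The plan is to show the following.
\begin{enumerate}[(a),wide]
\item Continuity on semisimple operators, where the nilpotent parts vanish, requires exactly local boundedness. Bounded divided differences multiply products $\prod(A-\lambda_j)$ that tend to zero on semisimple limits.
\item Continuity on all operators requires actual limits, since $\prod(A-\lambda_j)$ converges to a nonzero nilpotent.
\end{enumerate}

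For necessity, I would test along explicit sequences: diagonalizable $2\times2$ or $n\times n$ Jordan-type perturbations with colliding eigenvalues, chosen so that the divided difference blows up (resp. oscillates). Sufficiency follows from the same formula. This analytic bookkeeping, i.e.\ identifying precisely which divided-difference regularity is equivalent to continuity at non-simple-spectrum points, is where I expect the real difficulty to lie.
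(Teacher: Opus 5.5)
Your architecture matches the paper's. You reduce to one maximal abelian algebra; with $\cC^n(\cX)$ connected, continuity alone removes the permutation twist. You extract an equivariant, linking-preserving frame map and classify it by an FTPG-type theorem as $\Theta_T$ or $\Theta_T\circ\Theta_{ev}$. Finally you decide continuity of the eversion map via divided differences of $\ol{(\cdot)}$.

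\textbf{The gap: Step~3's link to functional calculus.} You need a bridge between the eversion map $\Phi$ of \Cref{eq:evers.mtr} and functional calculus, and what you wrote does not supply one. The identity $\Phi(A)=\Ad_{S^{-2}}A$ is true but leads nowhere. Moreover, $\Phi(A)$ is \emph{not} of the form $f(A)$. For non-normal $A=(\ell\mid\lambda)$ with simple spectrum, $\ell'$ is not a permutation of $\ell$, so $\Phi(A)$ does not even commute with $A$. The missing step, which is the one non-cited argument in the paper's proof, is to compose with the adjoint. Since $S=S^*$,
\[
  \Phi(A)^*=\Ad_S N^*=\sum_i\ol{\lambda_i}\,P_i=\ol{(\cdot)}(A),
\]
where $P_i$ are the spectral idempotents of $A$. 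Equivalently, the adjoint of the idempotent onto $\ell_i$ along $\bigoplus_{j\ne i}\ell_j$ is the idempotent onto $\ell'_i$ along $\ell_i^{\perp}=\bigoplus_{j\ne i}\ell'_j$. Because $(-)^*$ is a homeomorphism, two equivalences follow. Continuity of $\Phi$ on $\sigma^-_{ss}(\cX)$ is equivalent to continuity of $A\mapsto\ol{(\cdot)}(A)$ there. Likewise, $\Phi$ extends continuously to $\sigma^-(\cX)$ iff $A\mapsto\ol{(\cdot)}(A)$ does. The results of \cite{zbMATH06285212} cited in the paper characterize these properties by $B^{n-1}_{\Delta}$ and $C^{n-1}_{\Delta}$ respectively. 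Only after this reduction is your Newton-interpolation analysis about the right object.

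\textbf{Two smaller repairs.}
\begin{enumerate}[(i)]
\item Your sufficiency heuristic in (a) fails at semisimple limits with more than one eigenvalue. There the products $\prod_{j\le m}(A-\lambda_j)$ do not tend to $0$. Also, e.g., $\Delta\ol{(\cdot)}(z_1,z_2)=\ol{(z_2-z_1)}/(z_2-z_1)$ has no limit, so term-by-term convergence breaks down. You must localize. One way is to order the Newton nodes with one representative per limiting eigenvalue first, so that all later products tend to $\prod_{\mu\in\sigma(A)}(A-\mu)=0$. Another is to split off eigenvalue clusters by Riesz projections. Either way, you then need to check that the resulting mixed divided differences stay bounded.
\item In case~\Cref{item:thn:3variants:nrm} the frame map is defined only on $\bF^{\perp}(\bC^n)$. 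So it cannot be classified by restricting a classification of maps on all of $\bF(\bC^n)$. You need the separate classification of linking-preserving maps $\bF^{\perp}(\bC^n)\to\bF(\bC^n)$, which the paper cites. Your remark that eversion is trivial on orthonormal frames explains why no exotic branch survives, but it does not by itself prove it.
\end{enumerate}
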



\section{Sufficient connectedness and commutativity/spectrum preservation}\label{se:cs}

$\sigma(\bullet)\subset \bC$ stands for the spectrum of an operator in $M_n(\Bbbk)$, with subscript-decorated $\sigma^{-}(\cX)$ denoting various classes of operators with spectra contained in subsets $\cX\subseteq \bC$:
\begin{equation}\label{eq:sigma.bull}
  M_n(\Bbbk)
  \supseteq
  \sigma_{\bullet}^{-}(\cX)
  :=
  \left\{
    T\in M_n(\Bbbk)
    \ :\
    \left(\sigma(T)\subseteq \cX\right)
    \ 
    \wedge
    \ 
    T
    \left[
      \begin{aligned}
        \text{ arbitrary}&\text{ if $\bullet=\text{blank}$}\\
        \text{ semisimple}&\text{ if $\bullet=ss$}\\
        \text{ normal}&\text{ if $\bullet=\perp$}
      \end{aligned}
    \right.
  \right\}.
\end{equation}

The main results all concern spaces of operators in $M_n(\bC)$ with spectra contained in ``sufficiently connected'' subsets $\cX\subseteq \bC$. Some auxiliary language and notation will help express the relevant conditions.

\begin{notation}\label{not:config.grph}
  Consider a set $\cX$ and a positive integer $n\in \bZ_{>0}$.
  \begin{enumerate}[(1),wide]
  \item The \emph{coincidences} of $\mathbf{x}=(x_i)_{i=1}^n\in \cX^n$ are the pairs $\{i\ne j\}\subset \{1..n\}$ with $x_i=x_j$. The set of coincidences of $\mathbf{x}$ is denoted by $\cE^n(\mathbf{x})$. 

  \item The \emph{$n^{th}$ configuration space} \cite[Definition 1.1]{zbMATH05785888} of $\cX$ is
    \begin{equation*}
      \cC^n(\cX)
      :=
      \left\{\mathbf{x}=(x_1,\cdots,x_n)\in \cX^n\ :\ x_i\text{ distinct}\right\}
      =
      \left\{\mathbf{x}\ :\ \cE^n(\mathbf{x})=\emptyset\right\}.
    \end{equation*}

  \item Assume $\cX$ equipped with a topology, which then induces topologies on the configuration spaces $\cC^n(\cX)\subseteq \cX^n$.

    For a connected component $\cC\subseteq \cC^n(\cX)$ the (loop-free, undirected) \emph{$\cC^n$-graph $\Gamma^n(\cC)$} has $\{1..n\}$ as vertices and an edge $\{i\ne j\}$ whenever
    \begin{equation*}
      \ol{\cC}
      \cap
      \ol{\left(\cC\triangleleft(i,j)\right)}
      \cap
      \left(\cE^n\right)^{-1}\left(\{i,j\}\right)
      \ne\emptyset
      \quad
      \left(\text{closures in $\cX^n$}\right),
    \end{equation*}
    where ``$\triangleleft$'' denotes the right symmetric-group action
    \begin{equation*}
      \left(\mathbf{x}=(x_i)_{i=1}^n\in \cC^n(\cX)\right)\triangleleft\left(\theta\in S_n\right)
      :=
      \left(x_{\theta i}\right)_{i=1}^n
    \end{equation*}
    (in turn inducing an action on the space of connected components of $\cC^n(\cX)$).

    The terminology and notation apply to points $\mathbf{x}\in \cC^n(\cX)$: $\Gamma^n(\mathbf{x})$ is the $\cC^n$-graph of the connected component containing $\mathbf{x}$.
  \end{enumerate}
\end{notation}

\begin{examples}
  \begin{enumerate}[(1),wide]
  \item For $\cX:=\bS^1$ the $\cC^n$-graph of an arbitrary $\mathbf{x}\in \cC^n(\cX)$ is an $n$-cycle: the edges are precisely the pairs $\{i\ne j\}$ for $x_i$ and $x_j$ adjacent in a counterclockwise enumeration along the circle.

  \item Similarly, for $\cX:=\bR$ (or indeed, any connected non-singleton subset of $\bR^n$) $\cC^n$-graphs are $(n-1)$-edge paths: $\{i<j\}$ is an edge precisely when $x_i$ and $x_j$ are consecutive in an increasing enumeration of the components of $\mathbf{x}$.
  \end{enumerate}
\end{examples}

Recall \cite[Definition 3.1]{wil_top} that a topological space is \emph{perfect} if it clusters to all of its points: every $x\in \cX$ lies in the closure $\overline{\cX\setminus\{x\}}$. 

\begin{theorem}\label{th:nrm}
  Let $n\in \bZ_{\ge 3}$ and $\cX\subseteq \bC$ a perfect space with all graphs $\Gamma^n(\mathbf{x})$, $\mathbf{x}\in \cC^n(\cX)$ containing $n$-cycles. Assume furthermore that
  \begin{itemize}[wide]
  \item $S_n$ acts transitively on the connected components of $\cC^n(\cX)$;
  \item and that action is not free (i.e. the isotropy groups are not trivial). 
  \end{itemize}

  The continuous CS preservers $M_n(\bC)\supseteq \sigma_{\perp}^-(\cX)\xrightarrow{\phi} M_n(\bC)$ are precisely the conjugations or transpose conjugations by elements $\GL_n(\bC)$. 
\end{theorem}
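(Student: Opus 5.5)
The plan is to reduce to the eigenspace geometry of simple normal operators, show that continuity together with the $n$-cycle/isotropy hypotheses forces the induced line map to come from a projective or antiprojective transformation, and then use spectrum preservation to pin down eigenvalues.

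First, the easy direction: conjugations and transpose conjugations visibly preserve spectra and commutativity, and they are continuous. For the converse, fix a continuous CS preserver $\phi$.

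\emph{Step 1: simple normal operators.} For $T\in\sigma_\perp^-(\cX)$ with simple spectrum $\mathbf{x}\in\cC^n(\cX)$, the commutant of $T$ is the maximal abelian algebra of operators diagonal in its orthonormal eigenbasis $(\ell_i)$. Spectrum preservation forces $\phi(T)$ to have simple spectrum $\sigma(T)$, hence to be semisimple with eigenlines $\ell'_i$. Commutativity preservation, applied to all normal $T'$ sharing the eigenlines $(\ell_i)$, shows that $\phi(T')$ commutes with $\phi(T)$, so it is diagonal in $(\ell'_i)$. Hence $\phi$ induces a map from orthonormal frames of lines to independent line $n$-tuples, together with an eigenvalue assignment: on the $\ell_i$-eigenvalue $x_i$, $\phi$ places $x_{\pi(i)}$ on $\ell'_i$ for some permutation $\pi=\pi(\mathbf{x},(\ell_i))$. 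By continuity, $\pi$ is locally constant on $\cC^n(\cX)$ times the connected flag manifold, so it depends only on the component of $\mathbf{x}$.

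\emph{Step 2: eigenvalue collisions force the line map to be well defined.} This is the main obstacle. Approach a coincidence locus $x_i=x_j$ from a component $\cC$ along an edge of $\Gamma^n(\cC)$. The limiting operator has a $2$-dimensional eigenspace $\ell_i\oplus\ell_j$. Continuity of $\phi$ at that limit forces $\ell'_i\oplus\ell'_j$ to depend only on $\ell_i\oplus\ell_j$ and the other lines, and forces $\pi$ to pair $\{i,j\}$ compatibly from both sides $\cC$ and $\cC\triangleleft(i,j)$. Going around an $n$-cycle in $\Gamma^n$ then shows that $\pi$ permutes indices by an automorphism of the cycle structure. The non-free isotropy hypothesis supplies a nontrivial $\theta$ fixing $\cC$; combined with transitivity, this forces $\pi$ to be the identity on every component, i.e.\ $\phi$ preserves eigenvalue--eigenline pairing. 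I would first try to derive this by tracking a single transposition across an edge and using perfectness of $\cX$ to approach the diagonal.

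\emph{Step 3: a well-defined map on lines.} One then shows that $\ell'_i$ depends only on $\ell_i$, not on the rest of the frame. Rotating the other lines within $\ell_i^\perp$ passes through degenerate spectra where the other eigenvalues collide, and Step 2 keeps $\ell'_i$ fixed along such paths. This yields a continuous map $\psi:\bC P^{n-1}\to\bC P^{n-1}$ that sends orthogonal frames to independent frames. Using the spectral projections of degenerate normal operators, $\psi$ also respects spans: the eigenspace of a repeated eigenvalue maps to the span of the images. Since $n\ge3$, the fundamental theorem of projective geometry (continuous version) makes $\psi$ induced by a linear or conjugate-linear bijection. Continuity in the eigenvalues, together with the fact that eigenvalues are transported untwisted (Step 2), should exclude the conjugate-linear semilinear case except via transposes. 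This gives $\phi=\Ad_T$ or $\Ad_T\circ(-)^t$ on simple normal operators.

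\emph{Step 4: extension.} Operators with simple spectrum in $\cX$ are dense in $\sigma_\perp^-(\cX)$, since $\cX$ is perfect. By continuity, the formula extends to all of $\sigma_\perp^-(\cX)$.

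A further point: unlike the semisimple case, the eversion phenomenon of \Cref{eq:snssns} cannot appear here. It would send orthonormal frames to orthonormal frames, so it is trivial on normal operators.
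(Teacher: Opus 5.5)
\textbf{Overall.} Your architecture is the paper's. You reduce to the maximal abelian subalgebras $D_{\mathbf{\ell}}$ using the edge relations of $\Gamma^n(\cC)$ and the isotropy hypothesis (the paper's \Cref{pr:max.ab}). You then extract an $S_n$-equivariant, partition-linking-preserving frame map on $\bF^{\perp}(\bC^n)$, and finish with a projective-geometry rigidity theorem. Your remark that eversion is trivial on orthonormal frames is also correct.

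\textbf{Main gap: Step 2.} You flag this step as the main obstacle but do not carry it out, and its predicted outcome is wrong. Normalize so that $\phi(\mathbf{x})=\mathbf{x}\triangleleft\theta'$ on $\cC\triangleleft\theta$, with $\id'=\id$. Continuity at a point $\mathbf{y}$ with the single coincidence $y_i=y_j$, along an edge $\{i,j\}$, gives only $((i\ j)\theta)'\in\{\theta',\ \theta^{-1}(i\ j)\theta\,\theta'\}$. The solutions of this system are not ``automorphisms of the cycle structure''. By the symmetric-group result the paper imports (Proposition~1.4 of arXiv:2505.19393, which the $n$-cycle hypothesis feeds), they are the constant maps and the maps $\theta\mapsto(\tau\theta)^{-1}$ for fixed $\tau\in S_n$.

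The second family satisfies every edge relation, so tracking single transpositions across edges can never exclude it. It sends $\mathbf{c}\triangleleft\theta\mapsto\mathbf{c}\triangleleft\tau^{-1}$ for all $\theta$. In other words, it makes $\phi$ constant on $S_n$-orbits: a canonical reordering of eigenvalues. The non-free isotropy is used exactly to kill it. For $h\ne\id$ fixing $\cC$ one has $\cC\triangleleft h\theta=\cC\triangleleft\theta$, hence $(h\theta)'=\theta'$, whereas $(\tau h\theta)^{-1}\ne(\tau\theta)^{-1}$. Without this dichotomy, or an independent proof of it, you do not reach $\pi=\id$.

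\textbf{Smaller gap: Step 3.} Your line map $\psi$ is fine, and you do not need paths through degenerate spectra. The operator $xP_{\ell_i}+yP_{\ell_i^{\perp}}$ lies in every $D_{\mathbf{m}}$ with $m_i=\ell_i$. So, by Step 2, its image has $x$-eigenspace $m'_i$, which therefore depends only on $\ell_i$; this is just the linking property.

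The problem is the appeal to the classical fundamental theorem. It needs a bijective collineation, or at least a non-degenerate morphism of projective geometries. What you have is a continuous $\psi$ that maps each $\mathbb{P}(W)$ into some $\mathbb{P}(W')$ with $\dim W'=\dim W$, and sends orthogonal frames to independent ones. Injectivity of $\psi$ is never established. The paper avoids this by applying a frame-level rigidity theorem (Theorem~0.3 of arXiv:2601.11455), whose hypotheses are precisely equivariance plus linking preservation on $\bF^{\perp}(\bC^n)$.

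\textbf{Conjugate-linear case.} Nothing needs to be excluded here. Let $S=(\mathbf{\ell}\mid\mathbf{x})$ with $\mathbf{\ell}$ orthonormal, and let $T$ be conjugate-linear. Then the operator with eigenvalue $x_i$ on $T\ell_i$ equals $\Ad_T S^*=\Ad_{TJ}S^t$. This is exactly the transpose-conjugation family.
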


\begin{remark}\label{re:wkr}
  Note the anticipated weakening in casting \Cref{th:nrm} as \Cref{thn:3variants}\Cref{item:thn:3variants:nrm}: for the circle $\cX:=\bS^1$, which does meet the constraints imposed in the former statement, $\cC^n(\cX)$ will be disconnected as soon as $n\ge 3$.
\end{remark}

As maps of the form $\Ad_T:=T(-)T^{-1}$ and $\Ad_T\circ(-)^t$ evidently meet the requirements, the discussion will focus exclusively on the converse (i.e. the claim that this covers all possibilities). The proof strategy is very much in line with that of \cite[Theorem 2.1]{2501.06840v2}, relying on the one hand on symmetric-group combinatorics (\cite[Proposition 2.2]{2501.06840v2}, \cite[Proposition 1.4]{2505.19393v3}) in reducing the problem to maximal abelian subalgebras of $\sigma^{-1}_{\perp}(\cX)$ and on the other hand proceeding thence via one variant \cite[Theorem 0.1]{2601.11455v2} of the \emph{fundamental theorem of projective geometry} (e.g. \cite[Theorem 3.1]{zbMATH01747827}).

The first ingredient, then, is as follows (with the subscripts ``$\bullet$'' as in \Cref{eq:sigma.bull}); note that the matrix size need not be restricted to $n\ge 3$ only. 

\begin{proposition}\label{pr:max.ab}
  Under the hypotheses of \Cref{th:nrm} with $n\in \bZ_{\ge 1}$, its conclusion holds for CS preservers $\sigma_{\bullet}^-(\cX)\xrightarrow{\phi}M_n(\bC)$ restricted to maximal abelian semisimple subalgebras of the domain.
\end{proposition}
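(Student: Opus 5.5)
Since the domain is conjugation-invariant and $\Ad_T$ preserves the relevant classes, we may compose $\phi$ with a conjugation and assume the maximal abelian semisimple subalgebra is the diagonal algebra $D_n\subset M_n(\bC)$. Its intersection with the domain is then $\{\mathrm{diag}(\mathbf{x})\ :\ \mathbf{x}\in\cX^n\}$. On $D_n$ the conjugations and transpose conjugations coincide, since $\Ad_S\circ(-)^t=\Ad_S$ on diagonal matrices. The claim is therefore that there is a single $S\in\GL_n(\bC)$ with
\begin{equation*}
  \phi(\mathrm{diag}(\mathbf{x}))=S\,\mathrm{diag}(\mathbf{x})\,S^{-1}\quad\text{for all }\mathbf{x}\in\cX^n.
\end{equation*}
Because $\cX$ is perfect, $\cC^n(\cX)$ is dense in $\cX^n$: coincident coordinates can be perturbed apart within $\cX$. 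By continuity it therefore suffices to prove the identity on $\cC^n(\cX)$.

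\textbf{Step 1: a common eigenframe.} For $\mathbf{x}\in\cC^n(\cX)$ the spectrum of $\phi(\mathrm{diag}(\mathbf{x}))$ is $\{x_1,\dots,x_n\}$, which has $n$ distinct points. Hence $\phi(\mathrm{diag}(\mathbf{x}))$ is semisimple with simple spectrum and eigenlines $\ell_i(\mathbf{x}):=\ker(x_i-\phi(\mathrm{diag}(\mathbf{x})))$. Any two diagonal matrices commute, so all the images $\phi(\mathrm{diag}(\mathbf{x}))$ commute. Two commuting operators with simple spectra have the same eigenlines, so the unordered set $\{\ell_i(\mathbf{x})\}_i$ is a fixed frame $\{L_1,\dots,L_n\}$. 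We can then write $\ell_i(\mathbf{x})=L_{\pi(\mathbf{x})(i)}$ for a permutation $\pi(\mathbf{x})\in S_n$. The spectral projections depend continuously on $\mathbf{x}$ on $\cC^n(\cX)$, so $\pi$ is locally constant, and hence constant on each connected component $\cC$; write $\pi_\cC$ for its value.

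\textbf{Step 2: compatibility across components.} Suppose the $\cC^n$-graph $\Gamma^n(\cC)$ has an edge $\{i,j\}$, witnessed by a point $\mathbf{z}$ whose only coincidence is $z_i=z_j$ and which lies in $\ol{\cC}\cap\ol{\cC\triangleleft(i,j)}$. Evaluate $\phi(\mathrm{diag}(\mathbf{z}))$ as a limit from inside $\cC$ and from inside $\cC\triangleleft(i,j)$. This produces two spectral decompositions of the same operator. In each, the eigenvalue $z_i=z_j$ sits on a two-dimensional sum of frame lines, and every other $z_k$ sits on a single line. Comparing the single-line eigenspaces gives
\begin{equation*}
  \pi_{\cC\triangleleft(i,j)}(k)=\pi_\cC(k)\quad\text{for all }k\ne i,j,
\end{equation*}
with the same relation, suitably twisted by the transposition $(i,j)$, holding in the reindexed form. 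In either case this relates $\pi_{\cC\triangleleft(i,j)}$ to $\pi_\cC$ through a prescribed transposition.

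\textbf{Step 3: the combinatorics (main obstacle).} Every component has the form $\cC_0\triangleleft\theta$ by transitivity, so Step 2 becomes a system of constraints on the function $\theta\mapsto\pi_{\cC_0\triangleleft\theta}$, defined modulo the isotropy group of $\cC_0$. These constraints are indexed by the edges of the graphs, which contain $n$-cycles. The aim is to show that the only solutions are those with
\begin{equation*}
  \pi_{\cC_0\triangleleft\theta}=\pi_{\cC_0}\circ\theta \quad\text{(in the appropriate indexing)},
\end{equation*}
which is exactly what makes $\mathbf{x}\mapsto\sum_i x_i P_{L_{\pi(\mathbf{x})(i)}}$ equal to $\Ad_S\circ\mathrm{diag}$, with $S$ sending the $i^{th}$ standard basis vector into $L_{\pi_{\cC_0}(i)}$.

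I would take this step from the symmetric-group results \cite[Proposition 2.2]{2501.06840v2} and \cite[Proposition 1.4]{2505.19393v3}. The $n$-cycles in $\Gamma^n$ supply enough transpositions to generate $S_n$ along consistent paths. The non-trivial isotropy rules out the spurious solution in which $\pi$ twists by a nontrivial automorphism, such as a cycle reversal, around the components. Checking that these hypotheses match the abstract statements in those references, including degenerate small $n$ where the graphs are trivial, is where I expect the real work to lie. Once it is done, density together with continuity extends $\phi=\Ad_S$ from $\cC^n(\cX)$ to all of $\cX^n$.
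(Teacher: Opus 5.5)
Your skeleton matches the paper's proof. You reduce to the diagonal algebra, pass to the dense set $\cC^n(\cX)$, and obtain a permutation that is constant on each component. At each edge $\{i,j\}$ of $\Gamma^n(\cC)$ you extract the relation $\pi_{\cC\triangleleft(i\,j)}\in\{\pi_\cC,\ \pi_\cC\circ(i\,j)\}$, which is all Step~2 needs to say, and you feed this into \cite[Proposition 1.4]{2505.19393v3}. Steps~1 and~2 are fine.

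Step~3, however, has the dichotomy backwards. With your indexing $\ell_i(\mathbf{x})=L_{\pi(\mathbf{x})(i)}$, a conjugation $\Ad_S$ has $\ell_i(\mathbf{x})=\bC Se_i$ for \emph{every} $\mathbf{x}\in\cC^n(\cX)$. So what must be shown is that $\pi$ is constant on all of $\cC^n(\cX)$. Your target $\pi_{\cC_0\triangleleft\theta}=\pi_{\cC_0}\circ\theta$ instead places the eigenvalue $x_i=y_{\theta i}$ of $\mathbf{x}=\mathbf{y}\triangleleft\theta$, $\mathbf{y}\in\cC_0$, on $L_{\pi_{\cC_0}(\theta i)}$. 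That is, it says $\phi(\mathrm{diag}(\mathbf{y}\triangleleft\theta))=\phi(\mathrm{diag}(\mathbf{y}))$. This is a permutation-invariant map, not of the form $\Ad_S$, since that would force $\mathbf{y}\triangleleft\theta=\mathbf{y}$. The hedge ``in the appropriate indexing'' does not rescue this, because you pin the indexing down yourself via $Se_i\in L_{\pi_{\cC_0}(i)}$.

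In the paper's proof, $\phi(\mathbf{x})=\mathbf{x}\triangleleft\theta'$ on $\cC\triangleleft\theta$, so $\pi_{\cC\triangleleft\theta}=(\theta')^{-1}$ once $L_k=\bC e_k$. In those terms your formula is exactly the branch $\theta'=(\tau\theta)^{-1}$ of the cited dichotomy, and that branch is what the non-free-action hypothesis excludes. If $\id\ne\theta_0$ stabilizes $\cC_0$, then $\cC_0\triangleleft\theta_0=\cC_0$, so the formula would force $\pi_{\cC_0}=\pi_{\cC_0}\circ\theta_0$, i.e.\ $\theta_0=\id$. Your own remark that the function is only defined modulo the isotropy group already shows your stated target is ill-posed under the hypotheses.

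So the isotropy condition does not rule out a ``cycle reversal''. It rules out the symmetric ``sorting'' solution, which genuinely occurs when the action is free. For instance, for $\cX=\bR$, sending a Hermitian matrix to the diagonal matrix of its increasingly ordered eigenvalues is a continuous CS preserver. With the roles swapped, your Step~3 becomes the paper's argument. The target is that $\pi$ is constant, equivalently $\theta'=\id$ for all $\theta$ after normalizing $\phi=\id$ on $\cC_0$. The excluded branch is $\pi_{\cC_0\triangleleft\theta}=\pi_{\cC_0}\circ\theta$.
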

\begin{proof}
  A maximal abelian semisimple subalgebra is a conjugate of that of diagonal operators in $\sigma_{\bullet}^-(\cX)$, so it suffices to work with that diagonal algebra $D\cong \cX^n$ (having identified the $n$-tuple $(\lambda_i)_i\subset \cX^n$ with the corresponding diagonal matrix). Observe also that the assumed perfection of $\cX$ ensures the density of $\cC^n(\cX)\subseteq \cX^n$, so that (given the continuity of $\phi$) it is enough to prove the statement for the subspace $D_s\subseteq D$ of simple diagonal operators.

  Given the transitivity of the $S_n$-action on the connected components of $\cC^n(\cX)$, we have to argue that $\phi$ restricts to a conjugation $\Ad_T$ on
  \begin{equation*}
    \cC^n(\cX)
    =
    \bigcup_{\theta\in S_n}\left(\cC\triangleleft\theta\right)
    \quad
    \left(\cC\subseteq \cC^n(\cX)\cong D_s\right)
  \end{equation*}
  for a fixed connected component $\cC\subseteq \cC^n(\cX)$. In conjunction with CS preservation, the connectedness of $\cC$ implies that for all permutations $\theta\in S_n$
  \begin{equation*}
    \left(\cC\triangleleft \theta\right)
    \ni
    \mathbf{x}
    \xmapsto{\quad\phi\quad}
    \mathbf{x}\triangleleft\theta'
    \in
    \left(\cC\triangleleft \theta\theta'\right)
  \end{equation*}
  for $\theta'\in S_n$ depending only on $\theta$ (rather than $\mathbf{x}\in \cC\triangleleft\theta$). By that selfsame CS preservation,
  \begin{equation*}
    \forall\left(\left\{i,j\right\}\in \Gamma^n(\cC)\right)
    \forall\left(\theta\in S_n\right)
    \bigg(
    \left(
      (i\ j)\cdot \theta
      =
      \theta\cdot \Ad_{\theta^{-1}}(i\ j)
    \right)'
    \in
    \left\{\theta',\ \Ad_{\theta^{-1}}(i\ j)\cdot \theta'\right\}
    \bigg).
  \end{equation*}
  This implies by \cite[Proposition 1.4]{2505.19393v3} that $\theta\mapsto \theta'$ is either constant or of the form
  \begin{equation*}
    \theta
    \xmapsto{\quad}
    \theta':=
    \left(\tau\cdot \theta\right)^{-1},
    \text{ fixed }\tau\in S_n,
  \end{equation*}
  the latter possibility being ruled out by the non-triviality of the isotropy group of $\cC$ in $S_n$. Having assumed $\phi|_{\cC}=\id$ (i.e. $\id'=\id$) by composing $\phi$ with a conjugation, $\theta'=\id$ for all $\theta$ and $\phi=\id$ on $\cC^n(\cX)$.
\end{proof}

In preparation for the proof of \Cref{th:nrm}, we follow \cite[Introduction]{2601.11455v2} in writing
\begin{equation*}
  \bF(V):=\left\{\text{spanning line $n$-tuples}\right\}
  ,\quad
  \bF^{\perp}(V):=\left\{\text{spanning orthogonal line $n$-tuples}\right\}
\end{equation*}
for $n$-dimensional vector (or Hilbert) spaces $V$ over $\Bbbk\in \left\{\bR,\bC\right\}$.

\pf{th:nrm}
\begin{th:nrm}
  Each orthogonal-line tuple $\mathbf{\ell}=\left(\ell_i\right)_{i=1}^n\in \bF(\bC^n)$ determines a unique maximal abelian subalgebra $D_{\mathbf{\ell}}\le \sigma^-_{\perp}(\cX)$ whose simple operators have the $\ell_i$ as their eigenspaces. \Cref{pr:max.ab} provides a continuous map
  \begin{equation*}
    \bF^{\perp}(\bC^n)
    \xrightarrow{\quad\Theta\quad}
    \bF(\bC^n)
  \end{equation*}
  determined uniquely by
  \begin{equation*}
    \begin{gathered}
      D_{\ell\in \bF^{\perp}(\bC^n)}
      \ni
      \left(\text{simple }T\right)
      \xmapsto{\quad\phi\quad}
      \phi T
      \in 
      D_{\Theta\ell\in \bF(\bC^n)}\\
      \ell_i=\ker\left(\lambda_i-T\right)
      \iff
      (\Theta\ell)_i=\ker\left(\lambda_i-\phi T\right),
    \end{gathered}    
  \end{equation*}
  with the following properties:
  \begin{itemize}[wide]
  \item $\Theta$ is $S_n$-equivariant for the right $S_n$-actions $(\mathbf{\ell}\triangleleft\sigma)_i=\ell_{\sigma i}$, $\mathbf{\ell}\in \bF^{\bullet}(\bC^n)$;

  \item and $\Theta$ preserves the \emph{$\pi$-linking} relations $\sim_{\pi}$ for partitions
    \begin{equation*}
      \pi=\left(\pi_j\right)_{j=1}^s
      ,\quad
      \{1..n\}=\bigsqcup_{j=1}^s \pi_j
    \end{equation*}
    in the sense of \cite[Definition 0.2]{2601.11455v2}:
    \begin{equation*}
      \mathbf{\ell}
      \sim_{\pi}
      \mathbf{\ell'}
      \iff
      \forall\left(1\le j\le s \right)
      \left(\bigoplus_{i\in \pi_j}\ell_i = \bigoplus_{i\in \pi_j}\ell'_i\right).
    \end{equation*}
  \end{itemize}
  \cite[Theorem 0.3]{2601.11455v2} then ensures that $\Theta$ is of the form $\left(\ell_i\right)_i\mapsto \left(T\ell_i\right)_i$ for invertible $T$ acting either linearly or conjugate-linearly on $\bC^n$, whence the conclusion: $\phi$ is $\Ad_T$ in the first case and a transpose conjugation in the second:
  \begin{equation*}
    \forall\left(S\in \sigma^{-}_{\perp}(\cX)\right)
    \left(\phi S = \Ad_T S^* = \Ad_{TJ} S^t\right),
  \end{equation*}
  where $\bC^n\xrightarrow[\cong]{J}\bC^n$ is the conjugate-linear operator acting as the identity on the standard basis effecting the identification $\End(\bC^n)\cong M_n(\bC)$ assumed throughout. 
\end{th:nrm}

There are $\sigma^{-}_{ss,\text{blank}}(\cX)$ analogues of \Cref{th:nrm}, by necessity more elaborate due to new possibilities for what CS preservers might look like depending on the topology of $\cX$. To prepare the ground for those statements, recall some notions of function regularity from \cite[\S 3, especially Definition 3.1]{zbMATH06285212}.

\begin{definition}\label{def:fn.reg}
  \begin{enumerate}[(1),wide]
  \item For a symmetric $n$-variable function $f$ defined on $\cC^n(\Gamma_0)$ for a subset $\Gamma_0\subseteq \Gamma$ of an abelian group $\left(\Gamma,+\right)$ set
    \begin{equation*}
      \Delta f(z_0,\cdots,z_n)
      :=
      \frac{f(z_1,\cdots,z_n)-f(z_0,\cdots,z_{n-1})}{z_n-z_0}
    \end{equation*}
    (symmetric, defined on $\cC^{n+1}(\Gamma_0)$). Note that $\Delta$ can be iterated: $\Delta^0=\id$, $\Delta^k=\Delta\circ\cdots\circ\Delta$ ($k$-fold). 
    
  \item A function $\bC\supseteq \cX\xrightarrow{f}\bC$ is \emph{of class $B^k_{\Delta}(\cX)$} (or just \emph{$B^k_{\Delta}$} when $\cX$ is understood) if $\Delta^k f$ is locally bounded around every diagonal element $(z,\cdots,z)\in \cX^{k+1}$ for cluster points $z\in \cX$.
    
  \item $f$ as in the preceding item is \emph{of class $C^k_{\Delta}(\cX)$} (or just \emph{$C^k_{\Delta}$}) when $\Delta^k f$ has finite limits at all diagonal $(z,\cdots,z)\in \cX^{k+1}$ for cluster points $z\in \cX$.
  \end{enumerate}
\end{definition}

Also helpful:

\begin{notation}\label{not:frm.eigs}
  For $\mathbf{\ell}=(\ell_i)_i\in \bF(V)$ and $\mathbf{\lambda}=(\lambda_i)\in \bC^n$ the symbols $\left(\mathbf{\ell}\mid\mathbf{\lambda}\right)=\left(\ell_i\mid\lambda_i\right)_i$ denote the semisimple operator on $V$ with eigenvalue $\lambda_i$ along $\ell_i$. 
\end{notation}

\begin{theorem}\label{th:ss}
  Let $n\in \bZ_{\ge 3}$ and $\cX\subseteq \bC$ a perfect space with all graphs $\Gamma^n(\mathbf{x})$, $\mathbf{x}\in \cC^n(\cX)$ containing $n$-cycles. Assume furthermore that
  \begin{itemize}[wide]
  \item $S_n$ acts transitively on the connected components of $\cC^n(\cX)$;
  \item and that action is not free (i.e. the isotropy groups are not trivial). 
  \end{itemize}
  The continuous CS preservers
  \begin{equation*}
    M_n(\bC)\supseteq\sigma^{-}_{ss}(\cX)\xrightarrow{\phi}M_n(\bC)
  \end{equation*}
  are as follows. 
  
  \begin{enumerate}[(1),wide]
  \item\label{item:th:ss:notb} If complex conjugation is not $B^{n-1}_{\Delta}(\cX)$, precisely the conjugations and transpose conjugations.

  \item\label{item:th:ss:b} If complex conjugation is $B^{n-1}_{\Delta}(\cX)$, precisely the conjugations, transpose conjugations and their compositions with
    \begin{equation}\label{eq:evers.mtr}
      \sigma^{-}_{ss}(\cX)
      \ni
      \left(\mathbf{\ell}\mid\mathbf{\lambda}\right)
      \xmapsto{\quad\Phi\quad}
      \left(\mathbf{\ell'}\mid\mathbf{\lambda}\right)
      \in
      M_{n,ss}(\bC)
      ,\quad
      \forall\left(1\le i\le n\right)
      \left(\ell'_i:=\left(\bigoplus_{j\ne i} \ell_j\right)^{\perp}\right).
    \end{equation}
  \end{enumerate}
\end{theorem}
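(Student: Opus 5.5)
The plan mirrors the proof of \Cref{th:nrm}, with two new ingredients: a different version of the projective-geometry rigidity result, and a functional-calculus analysis of when the exotic candidate extends continuously across eigenvalue collisions.

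\textbf{Step 1: restriction to maximal abelian subalgebras.} By \Cref{pr:max.ab} applied with $\bullet=ss$, $\phi$ restricted to each maximal abelian semisimple subalgebra $D_{\mathbf{\ell}}$, $\mathbf{\ell}\in\bF(\bC^n)$, is a conjugation. This yields a continuous map
\begin{equation*}
  \bF(\bC^n)\xrightarrow{\ \Theta\ }\bF(\bC^n),
  \qquad
  \phi\left(\mathbf{\ell}\mid\mathbf{\lambda}\right)=\left(\Theta\mathbf{\ell}\mid\mathbf{\lambda}\right)
  \quad\text{for }\mathbf{\lambda}\in\cC^n(\cX).
\end{equation*}
As before, $\Theta$ is $S_n$-equivariant and preserves all $\pi$-linking relations $\sim_\pi$. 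Here commutativity preservation is applied to commuting pairs in $D_{\mathbf{\ell}}\cup D_{\mathbf{\ell'}}$ that share a block structure along $\pi$.

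\textbf{Step 2: rigidity.} Now the domain is all of $\bF(\bC^n)$ rather than $\bF^\perp(\bC^n)$. I would invoke the full-frame version of the rigidity theorem \cite[Theorem 0.1 / 0.3]{2601.11455v2}. This should say that such a $\Theta$ is either induced by a linear or conjugate-linear $T$, or is such a map composed with the eversion $\mathbf{\ell}\mapsto\mathbf{\ell'}$. The first two cases give conjugations and transpose conjugations, exactly as in the proof of \Cref{th:nrm}. The remaining case reduces, after composing with a (transpose) conjugation, to $\phi$ agreeing with $\Phi$ of \Cref{eq:evers.mtr} on operators with simple spectrum. By density (perfectness of $\cX$) and continuity, $\phi$ is then determined everywhere. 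So it remains to decide exactly when $\Phi$, defined on simple-spectrum operators, extends continuously to all of $\sigma^-_{ss}(\cX)$. Note that $\Phi$ is a CS preserver whenever it is well defined, as recalled in the introduction.

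\textbf{Step 3: the continuity criterion (main obstacle).} Write $(\mathbf{\ell}\mid\mathbf{\lambda})=\Ad_S N$ with $S\ge0$ and $N$ normal, so that $\Phi(\Ad_SN)=\Ad_{S^{-1}}N$. Then
\begin{equation*}
  \Phi(A)=\Ad_{S^{-2}}A=\Ad_{S^{-2}}N^{*}{}^{*},
  \qquad
  N^*=\ol{(-)}(N)\ \text{in the normal functional calculus}.
\end{equation*}
Equivalently, $\Phi(A)$ is the adjoint of $\ol{(-)}$ applied to $A$ through the semisimple functional calculus: $\ol{(-)}(A)=(\mathbf{\ell}\mid\ol{\mathbf{\lambda}})$, and $(\mathbf{\ell}\mid\ol{\mathbf{\lambda}})^*=(\mathbf{\ell'}\mid\mathbf{\lambda})$. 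So continuity of $\Phi$ on $\sigma^-_{ss}(\cX)$ is equivalent to continuity of $A\mapsto\ol{(-)}(A)$ there.

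\begin{itemize}[wide]
\item \emph{Sufficiency.} By \cite{zbMATH06285212}, the holomorphic-type calculus $f(A)$ is given by divided-difference (Newton/Lagrange) formulas in the powers of $A$. These are continuous on semisimple operators with spectra in $\cX$ exactly when $\Delta^{k}f$ is locally bounded near diagonals for $k\le n-1$. Only local boundedness is needed here, because on semisimple limits the nilpotent parts vanish, so the unbounded-limit terms multiply vanishing factors.
\item \emph{Necessity.} If $\Delta^{n-1}\ol{(-)}$ is unbounded near some $(z,\dots,z)$, I would exhibit semisimple simple-spectrum $A_k\to zI$. Concretely, take $A_k$ to be conjugates of diagonal operators by increasingly ill-conditioned matrices, tuned so that $A_k$ converges while $\ol{(-)}(A_k)$ diverges. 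This amounts to realizing the unbounded coefficient of $\prod(A-\lambda_i)$-type terms via a near-Jordan-block configuration with convergent $A_k$. This is the delicate part.
\end{itemize}

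I expect lower-order $\Delta^k$ boundedness to follow from that of $\Delta^{n-1}$ on perfect sets. Otherwise I would argue separately, using lower-dimensional clusters of eigenvalues.
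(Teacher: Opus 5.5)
Your proposal is correct and follows the paper's proof step for step. \Cref{pr:max.ab} yields the symmetric, $\pi$-linking-preserving $\Theta$ on $\bF(\bC^n)$; the full-frame rigidity theorem, which the paper cites as \cite[Theorem 0.4]{2601.11455v2} (reserving Theorem 0.3 for the orthogonal-frame setting of \Cref{th:nrm}), leaves only $\Theta_T$ and $\Theta_T\circ\Theta_{ev}$; and composing $\Phi$ with the adjoint turns it into the eigenvalue-conjugating calculus $A\mapsto \ol{(-)}(A)$, exactly as your identity $(\mathbf{\ell}\mid\ol{\mathbf{\lambda}})^*=(\mathbf{\ell'}\mid\mathbf{\lambda})$ shows. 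The only difference is that the paper does not re-derive the continuity criterion (your sufficiency/necessity sketch and the lower-order boundedness issue) but quotes it as \cite[Theorem 4.3, (ii) $\Leftrightarrow$ (iv)]{zbMATH06285212}; your intermediate expression $\Ad_{S^{-2}}N^{*}{}^{*}$ is garbled, but nothing depends on it.
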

\begin{proof}
  First assume $\phi$ given. \Cref{pr:max.ab} applies just as well in the present context as it did in that of \Cref{th:nrm} whence, as in the latter's proof, a symmetric, partition-linking-preserving self-map $\Theta$ on $\bF(\bC^n)$ mapping the eigenspaces of $T\in \sigma^{-}_{ss}(\cX)$ to those of $\phi T$. This is one point of departure from the earlier proof: \cite[Theorem 0.4]{2601.11455v2} now classifies the possible $\Theta$ as 
  \begin{equation*}
    \Theta\in \left\{\Theta_T,\ \Theta_T\circ\Theta_{ev}\ :\ \bC^n\xrightarrow[\text{(conjugate-)linear $\cong$}]{T}\bC^n\right\}
  \end{equation*}
  for
  \begin{equation*}
    \left(\ell_i\right)\xrightarrow{\quad\Theta_T\quad}\left(T\ell_i\right)
    \quad\text{and}\quad
    \left(\ell_i\right)_i
    \xrightarrow[\quad]{\quad\Theta_{ev}\quad}
    \left(\left(\bigoplus_{j\ne i}\ell_j\right)^{\perp}\right)_i.
  \end{equation*}
  This already reduces the possibilities for $\phi$ to those listed in \Cref{item:th:ss:b}, and the proof will be complete once we argue that \Cref{eq:evers.mtr} is continuous \emph{precisely} when $\overline{(\bullet)}\in B^{n-1}_{\Delta}(\cX)$.

  To verify this last claim, observe first that \Cref{eq:evers.mtr} can be recast as \Cref{eq:snssns} restricted to $\sigma^{-}_{ss}(\cX)$. That restriction is continuous precisely when it is so when further composed with the adjoint: $\Ad_S N\mapsto \Ad_S N^*$. This, though, is nothing but the map obtained by complex-conjugating all eigenvalues of an arbitrary semisimple operator, while leaving its eigenspaces unaffected; \emph{that} map is indeed \cite[Theorem 4.3, (ii) $\Leftrightarrow$ (iv)]{zbMATH06285212} continuous if and only if complex conjugation $\overline{(\bullet)}$ is of class $B^{n-1}_{\Delta}(\cX)$, finishing the proof. 
\end{proof}


\begin{theorem}\label{th:all}
  Let $n\in \bZ_{\ge 3}$ and $\cX\subseteq \bC$ a perfect space with all graphs $\Gamma^n(\mathbf{x})$, $\mathbf{x}\in \cC^n(\cX)$ containing $n$-cycles. Assume furthermore that
  \begin{itemize}[wide]
  \item $S_n$ acts transitively on the connected components of $\cC^n(\cX)$;
  \item and that action is not free (i.e. the isotropy groups are not trivial). 
  \end{itemize}
  The continuous CS preservers
  \begin{equation*}
    M_n(\bC)\supseteq\sigma^{-}(\cX)\xrightarrow{\phi}M_n(\bC)
  \end{equation*}
  are as follows. 
  
  \begin{enumerate}[(1),wide]
  \item If complex conjugation is not $C^{n-1}_{\Delta}(\cX)$, precisely the conjugations and transpose conjugations.

  \item If complex conjugation is $C^{n-1}_{\Delta}(\cX)$, precisely the conjugations, transpose conjugations and their compositions with the unique continuous extension of \Cref{eq:evers.mtr} to $\sigma^{-}(\cX)$.
  \end{enumerate}  
\end{theorem}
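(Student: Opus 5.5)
The argument will parallel the proof of \Cref{th:ss}, with one extra step: reducing from arbitrary operators to semisimple ones. I will treat the two directions in turn.

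\emph{Reduction to the semisimple case.} Let $\phi$ be a continuous CS preserver on $\sigma^{-}(\cX)$. Its restriction to $\sigma^{-}_{ss}(\cX)$ is again a continuous CS preserver. By \Cref{th:ss} that restriction is one of three things: a conjugation, a transpose conjugation, or (only when $\ol{(\bullet)}\in B^{n-1}_{\Delta}(\cX)$) one of these composed with $\Phi$ of \Cref{eq:evers.mtr}.

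The perfection of $\cX$ makes operators with simple spectrum in $\cX$ dense in $\sigma^{-}(\cX)$. Indeed, any $T$ with $\sigma(T)\subseteq\cX$ can be put in triangular form, and its diagonal entries can then be perturbed within $\cX$ to distinct values. Such perturbations are simple, hence semisimple, so $\sigma^{-}_{ss}(\cX)$ is dense in $\sigma^{-}(\cX)$. Consequently $\phi$ is determined by its semisimple restriction. In the first two cases $\phi$ therefore agrees everywhere with the continuous map $\Ad_T$ or $\Ad_T\circ(-)^t$. In the third case, composing with $\Ad_T$ or $\Ad_T\circ(-)^t$ reduces us to the following: $\Phi|_{\sigma^{-}_{ss}(\cX)}$ must admit a continuous extension to $\sigma^{-}(\cX)$, necessarily unique by density, and $\phi$ is that extension.

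\emph{The analytic criterion.} It remains to show that $\Phi$ extends continuously from $\sigma^{-}_{ss}(\cX)$ to $\sigma^{-}(\cX)$ if and only if $\ol{(\bullet)}\in C^{n-1}_{\Delta}(\cX)$. For the ``if'' direction, we must also check that the extension is a CS preserver. As in the proof of \Cref{th:ss}, compose with the adjoint. On semisimple operators, $\Ad_S N\mapsto\Ad_{S^{-1}}N$ followed by $(-)^*$ gives $\Ad_S N^*$, the map that conjugates eigenvalues and keeps eigenspaces. This map is the functional calculus $T\mapsto \ol{(\bullet)}(T)$ for semisimple $T$. Since $(-)^*$ is a homeomorphism of $M_n(\bC)$, continuous extendability of $\Phi$ is equivalent to that of this functional calculus.

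I would then invoke the remaining equivalences of \cite[Theorem 4.3]{zbMATH06285212}. These are the parts of that result concerning extension of the functional calculus $f(T)$ from semisimple to arbitrary operators with spectra in $\cX$, characterized by $f\in C^{n-1}_{\Delta}(\cX)$. The extension is then given by the divided-difference (Hermite–Newton) formula on Jordan blocks. Note that $C^{n-1}_{\Delta}\subseteq B^{n-1}_{\Delta}$, consistent with the preceding case split.

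\emph{The CS property of the extension.} Spectrum preservation passes to limits, because spectra depend continuously on the operator and $\Phi$ preserves spectra on the dense subset. Commutativity preservation needs more care, since commuting pairs of arbitrary operators need not be limits of commuting semisimple pairs. I would avoid approximation here. The extended functional calculus $\ol{(\bullet)}(T)$ is a polynomial in $T$, namely the Hermite interpolant. So it commutes with everything commuting with $T$, and commutativity is preserved. Composing with $(-)^*$, which preserves commutativity, then transfers this to the extension of $\Phi$.

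\emph{Main obstacle.} I expect the main obstacle to be confirming that the cited characterization in \cite{zbMATH06285212} covers exactly continuous extendability to all of $\sigma^{-}(\cX)$ under $C^{n-1}_{\Delta}$. If it does not, I would argue directly: near an $n$-fold coincident eigenvalue, the nilpotent part of the extension is controlled by $\Delta^{k}\ol{(\bullet)}$ for $k\le n-1$. A limit exists precisely when these divided differences converge at diagonal points, and the top order $k=n-1$ is the binding one, attained by approach along a single Jordan block.
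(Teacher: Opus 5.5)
Your proposal is correct and follows essentially the paper's route. The paper likewise reduces to the semisimple classification behind \Cref{th:ss}; your black-box use of that theorem together with density of $\sigma^{-}_{ss}(\cX)$ in $\sigma^{-}(\cX)$ amounts to the same thing. It then composes with the adjoint and rests on the criterion that the eigenvalue-conjugating functional calculus extends continuously to $\sigma^{-}(\cX)$ if and only if $\ol{(\bullet)}\in C^{n-1}_{\Delta}(\cX)$. The one correction concerns your ``main obstacle'': the result needed is \cite[Proposition 4.5, (i) $\Leftrightarrow$ (iii)]{zbMATH06285212}, not part of Theorem 4.3 there. Your explicit check that the extension is again a CS preserver (spectra by continuity, commutativity via the Hermite-interpolant description) spells out a step the paper leaves implicit.
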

\begin{proof}
  One proceeds exactly as in proving \Cref{th:ss}, with one distinction: the argument now boils down to \Cref{eq:evers.mtr} extending to a continuous map $\sigma^{-}(\cX)\to M_n(\bC)$ if and only if $\overline{(\bullet)}\in C^{n-1}_{\Delta}(\cX)$, via an application of \cite[Proposition 4.5, (i) $\Leftrightarrow$ (iii)]{zbMATH06285212} (rather than the previously cited \cite[Theorem 4.3, (ii) $\Leftrightarrow$ (iv)]{zbMATH06285212}). 
\end{proof}


\addcontentsline{toc}{section}{References}

\def\polhk#1{\setbox0=\hbox{#1}{\ooalign{\hidewidth
  \lower1.5ex\hbox{`}\hidewidth\crcr\unhbox0}}}


\Addresses

\end{document}